\newtheorem{theorem}{Theorem}[section]%[section]
\theoremstyle{remark}
\numberwithin{equation}{section}
\author{Qinghu Hou}
\author{Haihong He}
\address{DEPARTMENT OF MATHEMATICS, SHANGHAI UNIVERSITY, SHANGHAI 200444, P. R. CHINA}
\email{$^*$ Corresponding author. qh\_hou@tju.edu.cn (Q. Hou), hehaihong5@163.com (H. He), xiaoxiawang@shu.edu.cn (X. Wang).}
\author{Xiaoxia Wang*}
\thanks{This work is supported by Natural Science Foundation of Shanghai (22ZR1424100).}
\title[Ramanujan-inspired series for $1/\pi$  involving harmonic numbers]{Ramanujan-inspired series for $1/\pi$ \\ involving harmonic numbers}
\subjclass[2010]{Primary 33D15; Secondary 05A15}
\keywords{Ramanujan-like formulas; harmonic numbers; the derivative operator;  Dougall's well-poised $_5F_4$-series; hypergeometric transformations.}
\begin{document}

\begin{abstract}
By applying the derivative operator to the known identities  from hypergeometric series or WZ pairs,  we obtain seven  series   associated with  harmonic numbers. Specifically, six of them are Ramanujan-like formulas for $1/\pi$ and the remaining one contains  harmonic numbers of order 2. As  conclusions,  Sun's five conjectural series are proved.
\end{abstract}

\maketitle

\section{Introduction}

In 1914, Ramanujan \cite{Rama} first systematically investigated some series for $1/\pi$,  including the following beautiful ones
\begin{align}
&\sum_{k=0}^{\infty}(-1)^k\frac{(\frac{1}{2})_k^3}{(1)_k^3}(4k+1)=\frac{2}{\pi},\label{ra1}\\
&\sum_{k=0}^{\infty}\frac{1}{4^k}\frac{(\frac{1}{2})_k^3}{(1)_k^3}(6k+1)=\frac{4}{\pi},\label{ra2}\\
&\sum_{k=0}^{\infty}\frac{(\frac{1}{2})_k(\frac{1}{4})_k(\frac{3}{4})_k}{(-4)^k(1)_k^3}(20k+3)=\frac{8}{\pi},\label{ra3}\\
&\sum_{k=0}^{\infty}\frac{1}{64^k}\frac{(\frac{1}{2})_k^3}{(1)_k^3}(42k+5)=\frac{16}{\pi}\label{ra4}.
\end{align}
Here and in what follows, the Pochhammer symbol is defined as
\begin{align*}
(x)_n=\Gamma(x+n)/\Gamma(x),
%=x(x+1)\cdots(x+n-1),
\end{align*}
where $\Gamma(x)$ is Euler's gamma function and $n$ is a nonnegative integer.

In 2013, Guillera \cite[(32)]{Guillera} proved an elegant Ramanujan-type formula
\begin{align*}
\sum_{k=0}^{\infty}(-1)^{k}\frac{(\frac{1}{2})_k^3}{(1)_k^3}\big\{3(4k+1)H_{k}-2\big\}=-\frac{12\log2}{\pi},
\end{align*}
%Based on the evaluation of a parameter derivative of a beta-type integral,
where $H_n$  denotes  the ordinary harmonic numbers
$$
H_0=0\quad\text{and}\quad H_n=\sum_{k=1}^{n}\frac{1}{k}.
$$
For a complex number $x$ and a positive integer $l$, the generalized  harmonic numbers of order $l$ are defined as
$$
H_0^{(l)}(x)=0\quad\text{and}\quad H_n^{(l)}(x)=\sum_{k=1}^{n}\frac{1}{(x+k)^l}.
$$
%It is an interesting issue to evaluate summations involving harmonic numbers.
In \cite{sun}, Sun  conjectured hundreds of series and congruences with summands containing harmonic numbers, such as
\begin{align*}
\sum_{k=0}^{\infty}\frac{(\frac{1}{2})_k^3(\frac{1}{4})_k(\frac{3}{4})_k}{16^k(1)_k^5}\big\{(120k^2+34k+3)(H_{2k}-2H_k)+68k+9\big\}=\frac{128\log2}{\pi^2},
\end{align*}
which was recently confirmed by Wei \cite{wei3} through the derivative operator method. It is an interesting topic to evaluate summations involving harmonic numbers. Apart from \cite{Guillera} and \cite{sun}, more mathematical literature concerning harmonic sums  can be found in \cite{Chen, wei, Cam1, Cam2, Guillera2, KNB, weiG}.

Motivated by the work forementioned and the formulas \eqref{ra1}--\eqref{ra4}, we establish six  Ramanujan-type representations for $1/\pi$ involving harmonic numbers as follows.  We see that Sun's four conjectural series  \cite[Equations (3.52), (3.53), (3.77) and a special case of Conjecture 3.12 (\romannumeral1)]{sun} are verified.

\begin{theorem}\label{Theorem4}
\begin{align}\label{5}
\sum_{k=0}^{\infty}(-1)^{k}\frac{(\frac{1}{2})_k^3}{(1)_k^3}(4k+1)H_{2k}=-\frac{2\log2}{\pi}.
\end{align}
\end{theorem}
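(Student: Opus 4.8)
The plan is to read off \eqref{5} from Ramanujan's series \eqref{ra1} by differentiating a one–parameter deformation of it. The mechanism rests on the digamma identity $\psi(x+k)-\psi(x)=\sum_{j=0}^{k-1}(x+j)^{-1}$, which yields $\psi(k+\tfrac12)-\psi(\tfrac12)=2H_{2k}-H_k$ and $\psi(k+1)-\psi(1)=H_k$, so that for a free parameter $s$
\begin{equation*}
\frac{d}{d s}\log\frac{(\tfrac12+s)_k}{(1-s)_k}\Big|_{s=0}=\bigl(2H_{2k}-H_k\bigr)+H_k=2H_{2k}.
\end{equation*}
Hence, replacing \emph{one} of the three factors $(\tfrac12)_k/(1)_k$ occurring in \eqref{ra1} by $(\tfrac12+s)_k/(1-s)_k$ and applying $d/ds$ at $s=0$ produces exactly the weight $H_{2k}$, with no stray ordinary harmonic numbers left to dispose of.

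For the deformed identity I would invoke the very–well–poised ${}_4F_3$–summation at $-1$ (a specialization of Dougall's well–poised series),
\begin{equation*}
{}_4F_3\!\left[\begin{matrix} a,\ 1+\frac{a}{2},\ b,\ c \\ \frac{a}{2},\ 1+a-b,\ 1+a-c \end{matrix}\,;\,-1\right]=\frac{\Gamma(1+a-b)\,\Gamma(1+a-c)}{\Gamma(1+a)\,\Gamma(1+a-b-c)},
\end{equation*}
valid in its domain of convergence. Taking $a=\tfrac12$ (kept fixed), $b=\tfrac12+s$, $c=\tfrac12$ makes the ``$1+\tfrac a2$ over $\tfrac a2$'' factor equal the $s$–independent polynomial $4k+1$, turns the left side into $\mathcal S(s):=\sum_{k\ge0}(-1)^k\frac{(\frac12)_k^2(\frac12+s)_k}{(1)_k^2(1-s)_k}(4k+1)$, and gives the closed form $\mathcal S(s)=\dfrac{2\,\Gamma(1-s)}{\sqrt{\pi}\,\Gamma(\tfrac12-s)}$; in particular $\mathcal S(0)=\tfrac2\pi$ recovers \eqref{ra1}. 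Granting that $\mathcal S$ may be differentiated under the summation sign near $s=0$, the log–derivative computation above gives $\mathcal S'(0)=2\sum_{k\ge0}(-1)^k\frac{(\frac12)_k^3}{(1)_k^3}(4k+1)H_{2k}$, whereas differentiating the closed form gives $\mathcal S'(0)=\mathcal S(0)\bigl(\psi(\tfrac12)-\psi(1)\bigr)=\tfrac2\pi\bigl((-\gamma-2\log2)-(-\gamma)\bigr)=-\tfrac{4\log2}{\pi}$. Equating the two expressions for $\mathcal S'(0)$ delivers \eqref{5}.

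The one genuinely delicate point is the interchange of $d/ds$ with the infinite sum: at $z=-1$ the series is only conditionally convergent (its terms are $O(k^{-1/2})$ up to the alternating sign, and the differentiated terms are $O(k^{-1/2}\log k)$), so I would justify term–by–term differentiation by Abel's test applied uniformly in $s$ on a small real neighbourhood of $0$ — equivalently, by recognising $\mathcal S(s)$ as a locally uniformly convergent family of analytic functions — which then licenses identifying $\mathcal S'(0)$ with the termwise derivative. Everything else is routine: the half–integer digamma value $\psi(\tfrac12)=-\gamma-2\log2$ supplies the $\log 2$, and the appeal to \eqref{ra1} is already built into $\mathcal S(0)$. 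As a consistency check, the outcome must agree with Guillera's quoted identity $\sum(-1)^k\frac{(\frac12)_k^3}{(1)_k^3}\{3(4k+1)H_k-2\}=-\frac{12\log2}{\pi}$, and it does.
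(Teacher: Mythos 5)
Your proposal is correct and is essentially the paper's own proof: the paper also specializes Dougall's well-poised ${}_5F_4$ with $(a,b,d)=(\tfrac12,\tfrac12,-\infty)$ to get $\sum_{k\ge0}(-1)^k\frac{(1/2)_k^2(c)_k}{(1)_k^2(3/2-c)_k}(4k+1)=\frac{2\Gamma(3/2-c)}{\sqrt{\pi}\,\Gamma(1-c)}$ (which is exactly your $\mathcal S(s)$ under $c=\tfrac12+s$), differentiates in the free parameter, and sets $c=\tfrac12$ to obtain the weight $2H_{2k}$ and the value $-\tfrac{4\log2}{\pi}$. Your remarks on justifying term-by-term differentiation address a point the paper leaves implicit.
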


\begin{theorem}\label{Theorem1}
\begin{align}\label{1}
\sum_{k=0}^{\infty}\frac{1}{4^k}\frac{(\frac{1}{2})_k^3}{(1)_k^3}\big\{(6k+1)H_k-2\big\}&=-\frac{8\log 2}{\pi}.
\end{align}
\end{theorem}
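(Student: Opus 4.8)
The plan is to produce the bracket $(6k+1)H_k-2$ as the image, under the operator $\mathcal{D}:=\frac{d}{d\varepsilon}\big|_{\varepsilon=0}$, of a one-parameter identity that degenerates at $\varepsilon=0$ to Ramanujan's series \eqref{ra2}. Concretely, I would look for (or extract from Dougall's very-well-poised ${}_5F_4$-summation, or from a WZ pair in the spirit of Guillera) an identity of the shape
\[
\sum_{k=0}^{\infty}\frac{(\tfrac12)_k^{3}}{(1)_k^{2}\,(1+\varepsilon)_k}\,\frac{(6k+1)+c\,\varepsilon}{4^{k}}=\Phi(\varepsilon),
\]
where $\Phi$ is a quotient of Gamma functions. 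The parameter must occupy a ``$(1)_k$'' slot rather than a ``$(\tfrac12)_k$'' slot precisely so that differentiation yields the \emph{ordinary} harmonic number $H_k$ (a factor $(\tfrac12+\varepsilon)_k$ would instead produce $H_{2k}$-type sums, as in Theorem~\ref{Theorem4}). At $\varepsilon=0$ the left side is exactly the series in \eqref{ra2}, so $\Phi(0)=\tfrac{4}{\pi}$; and $\Phi$ should be normalised so that $(\log\Phi)'(0)=\psi(1)-\psi(\tfrac12)=2\log2$, for instance $\Phi(\varepsilon)=\tfrac{4}{\pi}\cdot\frac{\Gamma(\tfrac12)\,\Gamma(1+\varepsilon)}{\Gamma(\tfrac12+\varepsilon)}$ up to an $O(\varepsilon^{2})$ factor. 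Pinning down and rigorously proving this base identity --- in particular determining which Pochhammer carries $\varepsilon$ and what correction $c\,\varepsilon$ the linear factor needs --- is the first task; if it comes from Dougall's ${}_5F_4$ one specialises so that the very-well-poised factor $\tfrac{a/2+k}{a/2}$ equals $6k+1$, i.e.\ $a=\tfrac13$, and then passes through a quadratic or confluent transformation to reach the argument $\tfrac14$.

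Granting the parametric identity, I would apply $\mathcal{D}$ termwise. The decisive computation is $\frac{d}{d\varepsilon}\frac{1}{(1+\varepsilon)_k}\big|_{\varepsilon=0}=-\frac{H_k}{(1)_k}$ (equivalently $\frac{d}{d\varepsilon}(1+\varepsilon)_k\big|_{\varepsilon=0}=(1)_k H_k$), which turns the $\varepsilon=0$ summand into $-\frac{(\tfrac12)_k^{3}}{(1)_k^{3}}\frac{(6k+1)H_k}{4^{k}}$, while $\mathcal{D}$ applied to the linear factor $(6k+1)+c\,\varepsilon$ supplies the constant term --- this is where the ``$-2$'' must come from, forcing $c=2$ --- together with, should the true $\varepsilon$-dependence of that factor be richer than a constant, a leftover elementary Ramanujan-type sum which one evaluates in closed form by \eqref{ra2} (and, if needed, \eqref{ra1}, \eqref{ra3}, \eqref{ra4}). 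On the right, $\frac{d}{d\varepsilon}\log\Gamma(x+\varepsilon)\big|_{\varepsilon=0}=\psi(x)$ together with $\psi(1)-\psi(\tfrac12)=2\log2$ and $\Gamma(\tfrac12)=\sqrt{\pi}$ give $\Phi'(0)=\Phi(0)\cdot 2\log2=\tfrac{8\log2}{\pi}$. Since the differentiated left side equals $-\sum_{k\ge0}\frac{(\tfrac12)_k^{3}}{(1)_k^{3}}\frac{(6k+1)H_k-2}{4^{k}}$, comparison yields \eqref{1}.

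The main difficulties are analytic rather than formal. First, locating and honestly proving the correct base identity, together with its degeneration to \eqref{ra2}: if the identity is obtained as the $n\to\infty$ limit of a terminating well-poised identity (the customary route), the interchange of $\lim_{n\to\infty}$ with $\mathcal{D}$ must be justified, since the series in play converge only like $k^{-3/2}$ (and the harmonic variant like $k^{-3/2}\log k$), so one needs uniform tail bounds or a dominated-convergence argument valid near $\varepsilon=0$. Second, the termwise application of $\mathcal{D}$ to the infinite left-hand sum requires the same uniform control, which is routine but must be recorded. Once these points are in place, all that remains is the finite bookkeeping of the digamma values and of any auxiliary Ramanujan sums.
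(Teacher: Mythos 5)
Your differentiation mechanics and digamma bookkeeping are exactly right, and your overall strategy --- embed \eqref{ra2} in a one-parameter family whose parameter sits in a $(1)_k$ slot, differentiate at the base point, and read off $H_k$ on the left against $\psi(1)-\psi(\tfrac12)=2\log 2$ on the right --- is precisely the paper's strategy. The gap is at the load-bearing step: the parametric base identity is never produced, only conjectured in shape, and the shape you guess is not the one that actually exists. The identity the paper uses is \eqref{1c2},
\[
\sum_{k=0}^{\infty}\frac{1}{4^k}\frac{(\frac{1}{2})_k(c)_k(1-c)_k}{(1)_k^2(\frac{3}{2}-c)_k}\,\frac{3k-c+1}{2}=\frac{\Gamma(\frac{3}{2}-c)}{\sqrt{\pi}\,\Gamma(1-c)},
\]
obtained from the Chu--Zhang acceleration \eqref{Chu} of Dougall's theorem with $(a,b,d)=(\tfrac12,\tfrac12,\tfrac12)$ followed by the confluent limit $e\to-\infty$. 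Writing $c=\tfrac12-\varepsilon$, the parameter is carried by \emph{three} Pochhammers, $(\tfrac12-\varepsilon)_k(\tfrac12+\varepsilon)_k/(1+\varepsilon)_k$, with linear factor $\tfrac14(6k+1+2\varepsilon)$; your single-slot ansatz agrees with this only to first order in $\varepsilon$ (the product $(\tfrac12-\varepsilon)_k(\tfrac12+\varepsilon)_k$ is even in $\varepsilon$, which is exactly why the two $H_k(\pm\cdot)$ contributions cancel and only $H_k$ survives), and as an exact identity for general $\varepsilon$ the formula you posit is not available. Your correction constant $c=2$ does match the $+2\varepsilon$ above, so granted the true identity your computation closes; but the identity itself is the theorem's entire content.

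Moreover, the concrete route you sketch for locating that identity would not get you there. Taking $a=\tfrac13$ in Dougall's \eqref{F54} does make the well-poised factor equal $6k+1$, but a $(1)_k^3$ denominator then forces $b=c=d=a=\tfrac13$ and hence $(\tfrac13)_k$-type numerator Pochhammers incompatible with $(\tfrac12)_k^3$; and the confluent limit of \eqref{F54} lands at argument $-1$ (this is \eqref{th1}, used for Theorem \ref{Theorem4}), not at $\tfrac14$. In \eqref{ra2} the $4^{-k}$ comes from the $(1+a-b)_{2k}=(1)_{2k}$ produced by the accelerated transformation \eqref{Chu}, and the coefficient $6k+1$ comes from the factor $\alpha_k$ there, not from the well-poised factor of \eqref{F54}. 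So the missing ingredient is the acceleration step (or, as you alternatively suggest, a suitable Guillera-style WZ pair, which is how the paper handles Theorems \ref{Theorem7} and \ref{Theorem3}); without one of these exhibited and proved, the argument is a plan rather than a proof.
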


\begin{theorem}\label{Theorem5}
\begin{align}\label{2}
\sum_{k=0}^{\infty}\frac{1}{4^k}\frac{(\frac{1}{2})_k^3}{(1)_k^3}\big\{(6k+1)H_{2k}-1\big\}&=-\frac{8\log 2}{3\pi}.
\end{align}
\end{theorem}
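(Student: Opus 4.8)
\smallskip\noindent\textit{Sketch of the intended argument.} The plan is to obtain \eqref{2} by differentiating, at $\varepsilon=0$, a one‑parameter refinement of Ramanujan's series \eqref{ra2}, in the same spirit in which I would expect Theorem~\ref{Theorem1} to be proved. The first step is to realise \eqref{ra2} as the $\varepsilon=0$ case of an identity
\[
\sum_{k=0}^{\infty}\frac{1}{4^{k}}\,\frac{(\tfrac12)_k^{3}}{(1)_k^{3}}\,\Omega_k(\varepsilon)\,\bigl(\alpha(\varepsilon)k+\beta(\varepsilon)\bigr)=\Phi(\varepsilon),
\]
where $\Omega_k(\varepsilon)$ is a quotient of shifted Pochhammer symbols with $\Omega_k(0)=1$, $\alpha(0)=6$, $\beta(0)=1$, and $\Phi(\varepsilon)$ is an explicit quotient of Gamma functions with $\Phi(0)=4/\pi$. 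Such a parametric identity can be manufactured from Dougall's very‑well‑poised ${}_5F_4$‑summation (terminate it with a parameter $-n$, specialise the remaining parameters in terms of $\varepsilon$, let $n\to\infty$, and apply a classical hypergeometric transformation to bring the limit to the displayed shape), or from the WZ pair certifying \eqref{ra2}. The decisive choice is the placement of $\varepsilon$: if it sits in a ``$1$''‑type parameter, then $\partial_\varepsilon\Omega_k(\varepsilon)|_{\varepsilon=0}$ is built from $\psi(k+1)-\psi(1)=H_k$, which yields Theorem~\ref{Theorem1}; to reach the harmonic number $H_{2k}$ of \eqref{2} I would place $\varepsilon$ so that $\partial_\varepsilon\Omega_k(\varepsilon)|_{\varepsilon=0}$ also involves $\psi(k+\tfrac12)-\psi(\tfrac12)$, after which the Gauss duplication formula $\psi(2k+1)=\tfrac12\psi(k+\tfrac12)+\tfrac12\psi(k+1)+\log 2$ rewrites the combination as $H_{2k}$ together with lower‑order $H_k$‑ and constant terms.

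Next I would differentiate the identity in $\varepsilon$ and set $\varepsilon=0$. Differentiating term by term is legitimate, because after differentiation the $k$‑th summand is still $O(k^{-3/2}\log k)$ locally uniformly in $\varepsilon$, so the Weierstrass $M$‑test applies. On the left, $\partial_\varepsilon\Omega_k(\varepsilon)|_{\varepsilon=0}$ contributes (after the duplication formula) a rational linear combination of $(6k+1)H_{2k}$, $(6k+1)H_k$, $(6k+1)$ and $1$ weighted by $4^{-k}(\tfrac12)_k^3/(1)_k^3$, while $\partial_\varepsilon(\alpha k+\beta)|_{\varepsilon=0}$ adds a further $k$‑linear term with constant coefficients. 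Everything except the $(6k+1)H_{2k}$‑sum can then be eliminated: the combination of the $(6k+1)$‑ and $1$‑weighted sums that appears is evaluated from \eqref{ra2}, and the $(6k+1)H_k$‑weighted sum from Theorem~\ref{Theorem1} (equivalently \eqref{1}). Solving for the remaining sum and re‑collecting the constant reconstitutes the left side of \eqref{2}.

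Finally, on the right I would compute $\Phi'(0)$. Since $\Phi$ is a ratio of Gamma functions, $\Phi'(0)/\Phi(0)$ is an integer combination of values of $\psi$ at integers and half‑integers; the integer arguments and the rational parts cancel, leaving a multiple of $\psi(\tfrac12)-\psi(1)=-2\log 2$, and multiplying by $\Phi(0)=4/\pi$ together with the rational weights from the previous step produces exactly $-\tfrac{8\log 2}{3\pi}$, the $3$ in the denominator ultimately reflecting a $\tfrac13$‑type rational factor entering the $\varepsilon$‑expansion of $\alpha,\beta$ and $\Phi$. The main obstacle is the first step: producing a deformation $\Omega_k(\varepsilon)$ that simultaneously keeps the series summable in closed form near $\varepsilon=0$ and, via the duplication formula, yields precisely $H_{2k}$. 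An equivalent way to phrase the crux is that one must establish the auxiliary evaluation $\sum_{k\ge0}\frac{(1/2)_k^3}{(1)_k^3}\,4^{-k}(6k+1)\sum_{j=1}^{k}\frac{1}{2j-1}=\frac{4\log 2}{3\pi}$, since, combined with Theorem~\ref{Theorem1} and the elementary $H_{2k}=\tfrac12H_k+\sum_{j=1}^k\frac{1}{2j-1}$, it is equivalent to \eqref{2}; once this is in hand, the rest — differentiation under the sum, the digamma/duplication bookkeeping, and the reductions via \eqref{ra2} and Theorem~\ref{Theorem1} — is routine.
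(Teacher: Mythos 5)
Your overall strategy is the right one and is, in outline, the one the paper follows: deform a closed-form evaluation by a parameter, differentiate at the special point, and eliminate the unwanted $H_k$- and constant-weighted sums using \eqref{ra2} and Theorem~\ref{Theorem1}. Your reduction is also correct: writing $H_{2k}=\tfrac12H_k+\sum_{j=1}^{k}\tfrac{1}{2j-1}$, the identity \eqref{2} is equivalent, modulo Theorem~\ref{Theorem1}, to the auxiliary evaluation $\sum_{k\ge0}4^{-k}(\tfrac12)_k^3(1)_k^{-3}(6k+1)\sum_{j=1}^{k}\tfrac{1}{2j-1}=\tfrac{4\log2}{3\pi}$, and your bookkeeping of how the constants recombine to $-\tfrac{8\log2}{3\pi}$ checks out.

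However, the proposal stops exactly where the proof has to begin. The parametric identity $\sum_k 4^{-k}(\tfrac12)_k^3(1)_k^{-3}\,\Omega_k(\varepsilon)(\alpha(\varepsilon)k+\beta(\varepsilon))=\Phi(\varepsilon)$, with the parameter placed so as to generate $H_{2k}$ upon differentiation, is never exhibited, and you yourself flag its construction as ``the main obstacle.'' This is not a routine step: unlike the alternating series \eqref{ra1}, the series \eqref{ra2} is not a direct specialization of Dougall's ${}_5F_4$-sum, so one cannot simply ``terminate, specialise, and apply a classical transformation''; a genuinely nontrivial accelerated form of Dougall is required. The paper supplies precisely this missing ingredient: the Chu--Zhang transformation \eqref{Chu} with $(a,c,d,e)=(\tfrac12,\tfrac12,\tfrac12,-\infty)$ yields \eqref{2b2}, in which the deformation parameter $b$ sits inside the Pochhammer $(\tfrac32-b)_{2k}$, so that $\mathcal{D}_b$ at $b=\tfrac12$ produces $H_{2k}$ directly (no duplication formula needed) and gives $\sum_k 4^{-k}(\tfrac12)_k^3(1)_k^{-3}\{(6k+1)(3H_{2k}-2H_k)+1\}=\tfrac{8\log2}{\pi}$, which combined with \eqref{1} is \eqref{2}. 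Until you produce a concrete $\Omega_k(\varepsilon)$ and $\Phi(\varepsilon)$ (or otherwise evaluate the odd-harmonic sum you isolate), what you have is a correct reduction plus an unproved lemma, not a proof.
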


%\begin{theorem}\label{Theorem8}
%\begin{align}\label{H2}
%\sum_{k=0}^{\infty}\frac{1}{4^k}\frac{(\frac{1}{2})_k^3}{(1)_k^3}\big\{(6k+1)(-2H_{2k}^{(2)}+H_k^{(2)}+6(H_{2k}-H_k)^2)&+12(H_{2k}-H_k)\big\}\notag\\
%&=\frac{128(\log 2)^2}{3\pi}-2\pi.
%\end{align}
%\end{theorem}

\begin{theorem}\label{Theorem2}
\begin{align}\label{3}
\sum_{k=0}^{\infty}\frac{(\frac{1}{2})_k(\frac{1}{4})_k(\frac{3}{4})_k}{(-4)^k(1)_k^3}\big\{(20k+3)(H_{2k}-3H_k)+12\big\}=\frac{56\log 2}{\pi}.
\end{align}
\end{theorem}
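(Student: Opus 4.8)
The plan is to derive \eqref{3} by applying the derivative operator method to Ramanujan's formula \eqref{ra3}. The starting point is to embed \eqref{ra3} into a one-parameter family of hypergeometric identities. Concretely, I would look for a well-poised series of the shape
\[
\sum_{k=0}^{\infty}\frac{(\frac12+\varepsilon)_k(\frac14+a\varepsilon)_k(\frac34+b\varepsilon)_k}{(-4)^k(1+c\varepsilon)_k(1+d\varepsilon)_k(1+e\varepsilon)_k}\bigl((20k+3)+\text{(linear in }\varepsilon)\bigr)
\]
whose closed form, as a ratio of gamma functions in $\varepsilon$, reduces to $\frac{8}{\pi}$ at $\varepsilon=0$. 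Such a parametric identity should come from Dougall's well-poised ${}_5F_4$-summation (listed among the paper's keywords) or from the WZ pair underlying \eqref{ra3}; the coefficients $a,b,c,d,e$ of the perturbation are forced by well-poisedness and by the requirement that the $\varepsilon^0$-term be exactly \eqref{ra3}.

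Once the parametric identity $F(\varepsilon)=G(\varepsilon)$ is in hand, I would take $\frac{d}{d\varepsilon}\big|_{\varepsilon=0}$ of both sides. On the left, differentiating $\log$ of each Pochhammer factor produces sums of the form $H_k(x)=\sum_{j=1}^{k}\frac1{x+j}$ evaluated at $x=\tfrac12,\tfrac14,\tfrac34$ and at $x=1$ (the last giving $H_k$), together with the derivative of the linear polynomial $20k+3$. The combination $\frac14H_k(\tfrac14)+\frac14H_k(\tfrac34)$ telescopes to the even-indexed harmonic number $H_{2k}$ up to elementary corrections — this is the standard identity $H_k(\tfrac14)+H_k(\tfrac34)+\dots$ relating quarter-integer harmonic sums to $H_{2k}$ and $H_k$ — so that after collecting terms the polynomial combination becomes exactly $(20k+3)(H_{2k}-3H_k)+12$ for the right choice of perturbation coefficients. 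On the right side, differentiating the gamma-quotient $G(\varepsilon)$ at $\varepsilon=0$ yields a combination of digamma values $\psi$ at rational points, and the special values $\psi(1)-\psi(\tfrac12)=2\log2$, $\psi(\tfrac34)-\psi(\tfrac14)=\pi$, etc., collapse the result to $\dfrac{56\log2}{\pi}$.

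The main obstacle is the first step: locating (or constructing) the correct $\varepsilon$-deformation of \eqref{ra3}. Ramanujan's series for $1/\pi$ typically sit as limiting/degenerate cases of well-poised hypergeometric identities, so one must choose the deformation so that (i) the closed form stays a clean gamma-quotient, (ii) its value at $\varepsilon=0$ is \eqref{ra3} rather than a nearby series, and (iii) the induced harmonic-number combination on the left is proportional to $(20k+3)(H_{2k}-3H_k)+12$ rather than some other linear combination of $H_{2k}$, $H_k$, and constants. I expect this to require either invoking a suitable contiguous/limiting form of Dougall's ${}_5F_4$ theorem or exhibiting an explicit WZ pair certifying \eqref{ra3} and differentiating through the WZ recurrence. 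After that, the differentiation and the digamma-value bookkeeping are routine; care is needed only to track the constant term (the "$+12$") correctly, since it arises partly from the derivative of $20k+3$ and partly from the $k=0$ boundary contributions in the telescoping of the quarter-integer harmonic sums.
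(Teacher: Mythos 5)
Your overall strategy --- embed \eqref{ra3} in a parametric identity, differentiate, and evaluate digamma values --- is indeed the method of the paper, but the proposal has a genuine gap at exactly the point you flag as ``the main obstacle,'' and the specific mechanism you describe for producing $H_{2k}$ would not work. The quarter-integer harmonic sums do \emph{not} telescope to $H_{2k}$: the relevant identity is $H_k(-\tfrac14)+H_k(-\tfrac34)=4H_{4k}-2H_{2k}$, so differentiating the $(\tfrac14)_k$ and $(\tfrac34)_k$ factors inevitably introduces $H_{4k}$, which is absent from the target combination $(20k+3)(H_{2k}-3H_k)+12$. Consequently no single one-parameter deformation of the shape you write down (with only $(\cdot)_k$ Pochhammers moving) can land on \eqref{3}; one needs either $(\cdot)_{2k}$-type factors (whose derivatives give $H_{2k}(x)$ directly) or a cancellation of $H_{4k}$ across several identities. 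There is also a convergence subtlety you do not address: the natural parametric identity degenerates at the target point (the Dougall/Saalsch\"utz-type condition $1+2a-b-c-d-e>0$ becomes an equality), so each specialization requires multiplying by a vanishing factor and taking a one-sided limit.

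What the paper actually does is specialize the Chu--Zhang transformation of Dougall's ${}_5F_4$ (their equation \eqref{Chu}) \emph{four times}, freeing in turn each of the parameters $b,c,d,e$ around the base point $(\tfrac12,\tfrac12,\tfrac12,\tfrac14,\tfrac34)$, applying the derivative operator to each, and taking the singular limits. This produces four identities, \eqref{3b3}, \eqref{4c3}, \eqref{5d3} and \eqref{6e3}, whose summands involve various mixtures of $H_k$, $H_{2k}$, $H_{4k}$, $H_k(-\tfrac14)$, $H_k(-\tfrac34)$ and $\tfrac{1}{2k+1}$; only the specific linear combination $\eqref{4c3}-\eqref{3b3}-\eqref{5d3}-\eqref{6e3}$ cancels the $H_{4k}$ terms, the individual quarter-integer harmonic sums, and the $\tfrac{1}{2k+1}$ terms, leaving $(20k+3)(H_{2k}-3H_k)+12$. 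Your sketch would need to be rebuilt around this multi-identity bookkeeping (or an equivalent directional derivative in the full parameter space of \eqref{Chu}) before the ``routine'' differentiation and digamma evaluation can begin.
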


\begin{theorem}\label{Theorem7}
\begin{align}\label{H1}
\sum_{k=0}^{\infty}\frac{(\frac{1}{2})_k(\frac{1}{4})_k(\frac{3}{4})_k}{(-4)^k(1)_k^3}\big\{(20k+3)(2H_{4k}-H_{2k}+H_k)-2\big\}=-\frac{16\log 2}{\pi}.
\end{align}
\end{theorem}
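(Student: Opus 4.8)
The plan is to derive \eqref{H1} from Ramanujan's series \eqref{ra3} by the derivative-operator method. The first task is to produce a one-parameter deformation of \eqref{ra3}, namely an identity of the shape
\begin{equation*}
\sum_{k=0}^{\infty}\frac{(\tfrac12)_k\,(\tfrac14+\tfrac{\ta}{2})_k\,(\tfrac34+\tfrac{\ta}{2})_k\,(1+\ta)_k}{(-4)^k\,(1)_k^{4}}\big\{(20k+3)-2\ta\big\}=\frac{8}{\pi}\,g(\ta),
\end{equation*}
valid for $\ta$ in a complex neighbourhood of $0$, with $g$ analytic and $g(0)=1$; putting $\ta=0$ recovers \eqref{ra3}. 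The precise $\ta$-dependence of the Pochhammers and of the linear factor is not arbitrary: it is reverse-engineered from the harmonic combination in \eqref{H1}. Such a deformed identity should be available either from a Guillera-type WZ pair carrying $\ta$ as a free (spectator) parameter, or from a suitable specialisation or confluent limit of Dougall's very-well-poised ${}_5F_4$-summation followed by a quartic hypergeometric transformation. It is natural here to pass to the equivalent presentation of \eqref{ra3} in which $(\tfrac14)_k(\tfrac34)_k=(\tfrac12)_{2k}/4^k$ (and hence, after one further reduction, $(4k)!/(k!)^4$ up to a constant power), because then a single parameter shift inside the factorial $(4k)!$ is exactly what manufactures the $H_{4k}$ occurring in \eqref{H1}.

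Granting the deformed identity, the proof finishes by applying $\pa/\pa\ta$ at $\ta=0$ term by term, which is legitimate since the series converges geometrically (ratio tending to $-1/4$) uniformly for $\ta$ near $0$. Write $A_x:=\psi(k+x)-\psi(x)=\sum_{j=1}^{k}(x+j-1)^{-1}$. By the Gauss multiplication formula $H_{4k}=\tfrac14(A_{1/4}+A_{1/2}+A_{3/4}+A_1)$ and $H_{2k}=\tfrac12(A_{1/2}+A_1)$, so
\begin{equation*}
2H_{4k}-H_{2k}+H_k=\tfrac12A_{1/4}+\tfrac12A_{3/4}+A_1=\frac{\pa}{\pa\ta}\log\!\Big[(\tfrac14+\tfrac{\ta}{2})_k(\tfrac34+\tfrac{\ta}{2})_k(1+\ta)_k\Big]\Big|_{\ta=0}.
\end{equation*}
Hence the Pochhammer part of the $k$-th summand contributes $(20k+3)(2H_{4k}-H_{2k}+H_k)$ times the summand of \eqref{ra3}, while differentiating the linear factor $(20k+3)-2\ta$ contributes the constant $-2$ times that summand; adding these gives precisely the left-hand side of \eqref{H1}. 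The right-hand side becomes $\tfrac{8}{\pi}g'(0)$, and computing $g'(0)$ reduces to a fixed integer combination of the digamma values $\psi(\tfrac14),\psi(\tfrac34),\psi(\tfrac12),\psi(1)$; using $\psi(\tfrac14)+\psi(\tfrac34)=-2\ga-6\log2$, $\psi(\tfrac12)=-\ga-2\log2$, $\psi(1)=-\ga$ (so that every Euler--Mascheroni constant, and every $\pi$ coming from $\psi(\tfrac14)-\psi(\tfrac34)$, cancels by the $1/4\leftrightarrow 3/4$ symmetry of the deformation) one should land on $g'(0)=-2\log2$, whence the right-hand side equals $-\tfrac{16\log2}{\pi}$.

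The main obstacle is the first task: identifying and \emph{rigorously} establishing the correct $\ta$-deformed identity. Via a WZ pair one must exhibit the companion $G(k,n)$, check the WZ relation $F(k+1,n)-F(k,n)=G(k,n+1)-G(k,n)$ with $\ta$ present throughout, and sum along an antidiagonal or a horizontal line while controlling the boundary and limiting contributions; via Dougall's ${}_5F_4$ one must choose the parameters as functions of $\ta$ (possibly with one parameter sent to $\infty$) so that, after the quartic transformation, the summation theorem collapses to the displayed deformation, and then justify the limit. Once that identity is in hand, everything else is the routine $\psi$-function bookkeeping outlined above. The same circle of ideas applied to a different deformation direction (shifting $(\tfrac12)_k$ and the unit parameters rather than $(\tfrac14)_k,(\tfrac34)_k$) should likewise yield Theorem~\ref{Theorem2}, which explains why \eqref{3} and \eqref{H1} share the factor $20k+3$.
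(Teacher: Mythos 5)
Your strategy is the paper's own: deform \eqref{ra3} by one continuous parameter, differentiate at the origin, and read off $2H_{4k}-H_{2k}+H_k$ from the logarithmic derivatives of the shifted Pochhammer symbols. Your digamma bookkeeping (the identity $2H_{4k}-H_{2k}+H_k=\tfrac12 A_{1/4}+\tfrac12 A_{3/4}+A_1$, the $-2$ from the linear factor, the cancellation of $\gamma$ and of $\pi$ by the $\tfrac14\leftrightarrow\tfrac34$ symmetry, and the target value $g'(0)=-2\log 2$) is all correct. But the proof has a genuine gap, which you name yourself: the deformed identity is never exhibited or established. Reverse‑engineering a family whose value and first derivative at $\theta=0$ are forced by \eqref{ra3} and by the statement \eqref{H1} you are trying to prove is circular; the family must come from an independent source, and that source is the entire content of the proof.

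The paper supplies it by citing Guillera's WZ‑pair identity \eqref{G}: for the pair \eqref{WZ}, $\sum_{n\ge 0}G(n,k)=8/\pi$ for \emph{every} $k$. Rewritten in Pochhammer form (using $\Gamma(\tfrac12-k)\Gamma(\tfrac12+k)=\pi/\cos\pi k$ and the duplication formula) this is
\begin{equation*}
\sum_{n=0}^{\infty}\frac{(\tfrac12)_n(\tfrac14-\tfrac{k}{2})_n(\tfrac34-\tfrac{k}{2})_n}{(-4)^n\,(1)_n^{2}\,(1+k)_n}\,(20n+2k+3)\;=\;\frac{8\,\Gamma(1+k)}{\sqrt{\pi}\,\Gamma(\tfrac12+k)},
\end{equation*}
which differs from your guessed deformation: the parameter sits in a \emph{denominator} Pochhammer $(1+k)_n$ rather than a numerator $(1+\theta)_k$, and the right-hand side is an explicit Gamma quotient (constant $8/\pi$ in Guillera's normalization), not an undetermined $g(\theta)$. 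The two agree only to first order at the origin — which is all the derivative step uses, but the identity one can actually certify (via the WZ relation for \eqref{WZ} and the vanishing boundary terms) is Guillera's, not yours. Differentiating \eqref{G} at $k=0$ gives $\sum_n(-1)^n\binom{4n}{2n}\binom{2n}{n}^2 2^{-10n}\{(20n+3)(2H_{4n}-H_{2n}+H_n+2\log2)-2\}=0$, and adding $2\log 2$ times \eqref{ra3} yields \eqref{H1}. To complete your argument you must either verify Guillera's identity or prove your own deformation from a summation theorem; for the latter you have not specified which specialization of Dougall's ${}_5F_4$ would produce the quartic factor $(\tfrac14+\tfrac{\theta}{2})_k(\tfrac34+\tfrac{\theta}{2})_k$, and indeed the paper does not obtain this particular theorem from Dougall at all, only from the WZ pair.
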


\begin{theorem}\label{Theorem3}
\begin{align}\label{4}
\sum_{k=0}^{\infty}\frac{1}{64^k}\frac{(\frac{1}{2})_k^3}{(1)_k^3}\big\{(42k+5)(H_{2k}-H_k)+7\big\}=\frac{32\log2}{\pi}.
\end{align}
\end{theorem}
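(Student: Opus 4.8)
The plan is to establish \eqref{4} by the derivative-operator method, applied to a one-parameter refinement of Ramanujan's classical series \eqref{ra4}. The starting point is an identity $I(b)$, valid for $b$ in a neighbourhood of $0$, of the form
$$\sum_{k=0}^{\infty}\frac{1}{64^k}\,\frac{(\tfrac12)_k^2\,(\tfrac12+b)_k}{(1)_k^2\,(1+b)_k}\,\bigl\{(42k+5)+\lambda b\bigr\}=\frac{16}{\pi}\,g(b),$$
where $\lambda$ is a constant, $g$ is an explicit ratio of Gamma functions, and $g(0)=1$, so that $I(0)$ is exactly \eqref{ra4}. Such a $b$-deformation can be obtained either by inserting a free parameter into a WZ-pair proof of \eqref{ra4} (shifting $\tfrac12\mapsto\tfrac12+b$ in the relevant Pochhammer symbols and re-running the telescoping) or from a suitable contiguous extension of a Dougall-type $_5F_4$-evaluation together with the hypergeometric transformations used for series of the type \eqref{ra4}. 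Producing (or locating in the literature) this $I(b)$, and in particular checking that it forces $\lambda=14$ and $g'(0)=4\log2$, is where essentially all the work lies; this is the step I expect to be the main obstacle.

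Granting $I(b)$, the remainder is the standard logarithmic-derivative computation. Since
$$\frac{d}{db}\log\frac{(\tfrac12+b)_k}{(1+b)_k}\Big|_{b=0}=\sum_{j=0}^{k-1}\Bigl(\frac{1}{\tfrac12+j}-\frac{1}{1+j}\Bigr)=2\bigl(H_{2k}-H_k\bigr),$$
and since the summand decays geometrically with ratio tending to $\tfrac1{64}$, uniformly for $b$ in a small disc about $0$ (so that $\partial_b$ and $\sum_k$ interchange), applying $\partial_b$ at $b=0$ to $I(b)$ yields
$$\sum_{k=0}^{\infty}\frac{1}{64^k}\,\frac{(\tfrac12)_k^3}{(1)_k^3}\,\bigl\{2(42k+5)(H_{2k}-H_k)+\lambda\bigr\}=\frac{16}{\pi}\,g'(0).$$
Here $g'(0)$ is a rational combination of digamma values; the integer-argument pieces and the Euler--Mascheroni constants cancel, and the half-integer contributions collapse through $\psi(\tfrac12)-\psi(1)=-2\log2$, giving $g'(0)=4\log2$. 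With $\lambda=14$ the displayed identity reads
$$\sum_{k=0}^{\infty}\frac{1}{64^k}\,\frac{(\tfrac12)_k^3}{(1)_k^3}\,\bigl\{2(42k+5)(H_{2k}-H_k)+14\bigr\}=\frac{64\log2}{\pi},$$
which is precisely twice \eqref{4}; dividing by $2$ completes the proof.

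Two remarks. First, the constant $7$ in \eqref{4} is forced by this scheme: the $b$-derivative contributes $2(42k+5)(H_{2k}-H_k)$ with the undeformed slope $42$, so the linear factor of $I(b)$ may be shifted only by a pure constant ($42k+5+\lambda b$, with no $bk$ term), and matching the right-hand side pins down $\lambda=14$ and hence $7=\lambda/2$. Second, the $\log2$ on the right has a transparent source: $H_{2k}-H_k=\sum_{j=1}^{2k}(-1)^{j-1}/j$ is the truncated alternating harmonic sum, whose limit is $\log2$; this is consistent with, and a convenient numerical check on, the final constant $\tfrac{32\log2}{\pi}\approx 7.06$ against the first partial sums $7+\tfrac{61}{1024}+\cdots$.
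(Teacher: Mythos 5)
The core of your argument is missing. Everything after ``Granting $I(b)$'' is routine and correct (the logarithmic derivative of $(\tfrac12+b)_k/(1+b)_k$ at $b=0$ is indeed $2(H_{2k}-H_k)$, and the digamma bookkeeping is standard), but the identity $I(b)$ itself --- a one-parameter deformation with the very specific shape
$\sum_k 64^{-k}\,(\tfrac12)_k^2(\tfrac12+b)_k\,\{(42k+5)+\lambda b\}/\bigl((1)_k^2(1+b)_k\bigr)=\tfrac{16}{\pi}g(b)$, $g(0)=1$, $g'(0)=4\log2$, $\lambda=14$ --- is never produced, and you yourself flag it as ``where essentially all the work lies.'' That step \emph{is} the theorem (modulo Ramanujan's \eqref{ra4}); without it nothing is proved. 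Your second remark, that the constant $7$ is ``forced by this scheme,'' is circular: it presupposes that an identity of exactly this shape exists, which is precisely what has not been shown. It is also not clear that one does exist in this form: the known parameter-extensions of \eqref{ra4} (Guillera's) do not deform a single Pochhammer pair with a purely additive $\lambda b$ in the linear factor; the parameter enters through several Gamma factors simultaneously, e.g.\ $\Gamma(2n-k+1/2)$, $\Gamma(k+1/2)^2$ and $\Gamma(n+k+1)$, together with a $k$-dependent linear polynomial.

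The paper closes exactly this gap by citing an explicit such deformation: with the WZ pair $(F,G)$ of \eqref{WZ} and $H(n,k)=F(n+1,n+k+1)+G(n,n+k)$, Guillera proved $\sum_{n\ge0}H(n,k)=8/\pi$ for all $k$. Differentiating in $k$ at $k=0$ gives
$\sum_{n\ge0}\tbinom{2n}{n}^3 2^{-12n}\{(42n+5)(H_{2n}-H_n-2\log2)+7\}=0$,
and combining with \eqref{ra4} yields \eqref{4}. So your overall strategy (differentiate a parametrized refinement of \eqref{ra4} at the special point) is the right one and matches the paper's in spirit, but to complete the proof you must either verify your postulated $I(b)$ (by telescoping a $b$-deformed WZ pair and evaluating $g$), or replace it with Guillera's known identity as the paper does.
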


%Notice that \eqref{1} and \eqref{2} equal   the equations \cite[(3.68) and (3.69)]{sun} respectively in view of the  result \cite[p.623 (14.27)]{series}
%\begin{align*}
%\sum_{k=0}^{\infty}\frac{1}{4^k}\frac{(\frac{1}{2})_k^3}{(1)_k^3}=\frac{2}{3}\cdot\frac{\sqrt[3]{2}\sqrt{\pi}}{\Gamma(5/6)^3}.
%\end{align*}
From Theorems \ref{Theorem1} and \ref{Theorem5}, we obtain the following result
\begin{align*}
\sum_{k=0}^{\infty}\frac{1}{4^k}\frac{(\frac{1}{2})_k^3}{(1)_k^3}\big\{(6k+1)(H_{2k}-H_k)+1\big\}&=\frac{16\log 2}{3\pi},
\end{align*}
which has been given by Guillera (cf. \cite[Identity 1]{Guillera2}) and corresponds to another special case in \cite[Conjecture 3.12 (\romannumeral1)]{sun}.

%In fact, Theorem \ref{Theorem3} is a special case of Sun's conjecture (see \cite[see Conjecture 3.12]{sun}  for more general form).

Moreover, Sun \cite[Equation (4.37)]{sun} also proposed the following conjecture, which, to the best of our knowledge, had not  previously been confirmed.
\begin{theorem}\label{Theorem6}
\begin{align}\label{2H}
\sum_{k=1}^{\infty}\frac{(\frac{1}{2})_k^3(\frac{1}{4})_k(\frac{3}{4})_k}{16^k(1)_k^5}\big\{(120k^2+34k+3)\big(23H_{2k}^{(2)}-7H_k^{(2)}\big)+24\big\}=\frac{16}{3}.
\end{align}
\end{theorem}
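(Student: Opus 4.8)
The proof will follow the derivative-operator philosophy of Theorems~\ref{Theorem4}--\ref{Theorem3}, but carried to \emph{second} order and anchored on Guillera's Ramanujan-type series for $1/\pi^2$,
\[
\sum_{k=0}^{\infty}\frac{(\tfrac12)_k^3(\tfrac14)_k(\tfrac34)_k}{16^k(1)_k^5}\,(120k^2+34k+3)=\frac{32}{\pi^2},
\]
the very formula whose quadratic factor $120k^2+34k+3$ appears in \eqref{2H} and whose first-order perturbation is Wei's established identity $\sum_{k\ge0}\tfrac{(\frac12)_k^3(\frac14)_k(\frac34)_k}{16^k(1)_k^5}\{(120k^2+34k+3)(H_{2k}-2H_k)+68k+9\}=128\log2/\pi^2$. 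The first step is to display this $1/\pi^2$ evaluation as the $\mathbf a=\mathbf 0$ specialization of a hypergeometric/WZ identity $\sum_k T_k(\mathbf a)=R(\mathbf a)$ carrying one or two free parameters $\mathbf a$ --- most naturally the certificate parameter(s) of the WZ pair behind Guillera's formula --- and, crucially, to arrange the deformation \emph{symmetrically}, so that each perturbed quantity enters through ratios $\Gamma(x+a)\Gamma(x-a)$ and the $\tfrac14,\tfrac34$ pair is shifted \emph{together}. Symmetry forces $\partial_a T_k|_{0}=0$, makes $\partial_a^2 T_k|_{0}$ a pure combination of generalized harmonic numbers of order $2$, and makes $R(\mathbf a)$ even in $\mathbf a$; in particular no Catalan constant (from $\psi'(\tfrac14)-\psi'(\tfrac34)$) and no $\log 2$ can enter.

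Next I would differentiate $\sum_k T_k(\mathbf a)=R(\mathbf a)$ twice and set $\mathbf a=\mathbf 0$, using $\frac{d}{d\epsilon}(x+\epsilon)_k|_0=(x)_k\sum_{j=0}^{k-1}\tfrac1{x+j}$ and $\frac{d^2}{d\epsilon^2}(x+\epsilon)_k|_0=(x)_k[(\sum_{j=0}^{k-1}\tfrac1{x+j})^2-\sum_{j=0}^{k-1}\tfrac1{(x+j)^2}]$. Taking the appropriate second-order combination (a single $\partial_a^2$, or a weighted $\alpha\,\partial_a^2+\beta\,\partial_b^2$ in the two-parameter case, the weights being exactly what turns the order-$2$ harmonic part into the asymmetric combination $23H_{2k}^{(2)}-7H_k^{(2)}$), the left-hand side collapses to $\sum_k\tfrac{(\frac12)_k^3(\frac14)_k(\frac34)_k}{16^k(1)_k^5}\{(120k^2+34k+3)(23H_{2k}^{(2)}-7H_k^{(2)})+24\}$, the additive constant $24$ coming from the $a^2$-coefficient of the perturbed polynomial factor, and the change of range from $k\ge0$ to $k\ge1$ in \eqref{2H} being the trivial transposition of the explicit $k=0$ term. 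On the right-hand side, $\partial_a^2R|_0$ (and $\partial_b^2R|_0$) are expressed through $R(\mathbf 0)=32/\pi^2$ and values of the trigamma function at $1,\tfrac12,\tfrac14,\tfrac34$, all of which are rational multiples of $\pi^2$ by the reflection formulas ($\psi'(1)=\tfrac{\pi^2}{6}$, $\psi'(\tfrac12)=\tfrac{\pi^2}{2}$, $\psi'(\tfrac14)+\psi'(\tfrac34)=2\pi^2$); hence the $\pi^2$ cancels the $1/\pi^2$ in $R(\mathbf 0)$ and one is left with a pure rational, which the numerics identify as $\tfrac{16}{3}$.

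The real difficulty lies entirely in the setup: one must locate the correct parametrized identity and then fine-tune its symmetric deformation so that three things happen simultaneously --- the order-$2$ harmonic part emerges as precisely $23H_{2k}^{(2)}-7H_k^{(2)}$ (the lopsided weights $23$ and $-7$ being the genuine fingerprint of \eqref{2H}), the residual polynomial contribution collapses to the bare constant $24$, and the trigamma bookkeeping on the right produces exactly $\tfrac{16}{3}$. Should one instead perturb asymmetrically, as in Wei's first-order treatment, then squares of first-order harmonic numbers together with a stray $\log^2 2/\pi^2$ survive on both sides and have to be removed by subtracting further known identities; in either case, once the correct parametrized identity is fixed, the remaining work is a finite, purely mechanical expansion of Pochhammer symbols and Gamma functions.
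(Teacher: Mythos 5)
Your high-level strategy is the right one and is in fact the one the paper executes: a symmetric one-parameter deformation of a hypergeometric identity whose specialization is Guillera's series $\sum_{k\ge0}\frac{(\frac12)_k^3(\frac14)_k(\frac34)_k}{16^k(1)_k^5}(120k^2+34k+3)=\frac{32}{\pi^2}$, so that the first-order term vanishes at the symmetric point and the second-order information yields order-$2$ harmonic numbers, followed by a rational linear combination of two such deformations to manufacture the weights $23$ and $-7$. The paper realizes the ``second derivative at the symmetric point'' slightly differently but equivalently: it applies $\mathcal{D}_b$ once, observes that both sides of the resulting identity are odd about $b=\frac12$ (because the parent identity is invariant under $b\mapsto 1-b$), divides by $1-2b$, and lets $b\to\frac12$.

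The genuine gap is that your proposal defers---and never supplies---the only nontrivial ingredient, namely the explicit parametrized identities to be deformed. You say the difficulty ``lies entirely in the setup'' and leave the setup unresolved; without it there is nothing to mechanically expand. The paper's answer is Chu and Zhang's transformation \eqref{ChuTh14} of Dougall's $_5F_4$ sum, specialized twice: $(a,c,d,e)=(\frac12,\frac12,1-b,\frac12)$ giving \eqref{th6.1}, and $(a,b,d,e)=(\frac12,\frac12,\frac12,1-c)$ giving \eqref{th6.11}; the resulting limits \eqref{th6.4} and \eqref{th6.33} produce the combinations $7H_{2k}^{(2)}-2H_k^{(2)}$ and $12H_{2k}^{(2)}-4H_k^{(2)}$, and $8\times\eqref{th6.4}+3\times\eqref{th6.33}$ gives \eqref{2H}. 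A second, smaller inaccuracy: you predict the non-harmonic residue collapses to the bare constant $24$; in fact each individual deformation leaves a term proportional to $\frac{1}{2k+1}$ (namely $18-\frac{9}{2k+1}$ and $-16+\frac{24}{2k+1}$), and these cancel only in the final linear combination---so the weights are constrained not just by hitting $23$ and $-7$ but also by killing the $\frac{1}{2k+1}$ contribution, a condition your sketch does not anticipate. Also note the right-hand sides here are already the rational number $\frac{32}{3}$ with no $\pi^2$ bookkeeping needed, since the limit of $\frac{\psi(\frac12+b)-\psi(\frac32-b)+\psi(1-b)-\psi(b)}{1-2b}$ as $b\to\frac12$ is governed by $\psi'(\frac12)+\psi'(1)$ against the prefactor $\frac{1}{\pi^2}$, exactly as you anticipate.
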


As a prerequisite, we   review some basic concepts.
Given a differentiable function $f(x)$,  the derivative operator $\mathcal{D}_x$ is defined as
$$
\mathcal{D}_xf(x)=\frac{\text{d}}{\text{d}x}f(x).
$$
%Then we attain the following two simple relations:
%\begin{align*}
%\mathcal{D}_x(x)_n&=(x)_nH_n(x-1),\notag\\
%\mathcal{D}_x\frac{1}{(x)_n}&=-\frac{1}{(x)_n}H_n(x-1).
%\end{align*}
Besides,
%for a nonegative integer $n$, the polygamma function is defined as
%\begin{align*}
%\psi^{(n)}(x)=\frac{\text{d}^{n+1}}{\text{d}x^{n+1}}\log \Gamma(x)=(-1)^\sum_{k=0}^{\infty}\frac{1}{(k+x)^{2}}.
%\end{align*}
%Particularly,  $\psi^{(0)}(x)=\psi(x)$ is called digamma function
the digamma function is given by
\begin{align*}
\psi(x)=\mathcal{D}_x\big(\log \Gamma(x)\big)=
%\frac{\text{d}}{\text{d}x}\{\log \Gamma(x)\}
%\frac{\Gamma'(x)}{\Gamma(x)}=
-\gamma+\sum_{k=0}^{\infty}\bigg\{\frac{1}{k+1}-\frac{1}{k+x}\bigg\},
\end{align*}
where $\gamma$ stands for the Euler's constant. By acting the operator $\mathcal{D}_x$
 on the above equation, we obtain
 $$
\mathcal{D}_x\psi(x):= \psi'(x)=\sum_{k=0}^{\infty}\frac{1}{(k+x)^2}.
 $$
Additionally, there is a recurrence relation about the digamma function
\begin{align*}
\psi(x+1)=\psi(x)+\frac{1}{x}
\end{align*}
and some concrete values  which will be utilized in the proof
\begin{align*}
&\psi(1)=-\gamma, \qquad\qquad\qquad\qquad\quad \psi(\tfrac{1}{2})=-\gamma-2\log2, \notag\\
&\psi(\tfrac{1}{4})=-\gamma-3\log2-\frac{\pi}{2}, \qquad\quad \psi(\tfrac{3}{4})=-\gamma-3\log2+\frac{\pi}{2},\notag\\
&\psi'(1)=\frac{\pi^2}{6}, \qquad\qquad\qquad\qquad\quad \psi'(\tfrac{1}{2})=\frac{\pi^2}{2}.
\end{align*}

The rest of this paper is  organized as follows. By  means of the derivative operator method,   Theorem \ref{Theorem4},  Theorems \ref{Theorem1}--\ref{Theorem2}, Theorems \ref{Theorem7}--\ref{Theorem3}  and Theorem \ref{Theorem6} shall be proved in Sections 2, 3, 4 and 5 respectively. All above, the derivations of Theorems \ref{Theorem7}--\ref{Theorem3} will employ WZ pairs.
%All above, the proofs of Theorems \ref{Theorem2} and \ref{Theorem6} are relatively complicated.

\section{Proof of Theorem \ref{Theorem4}}
In  the proof of  Theorem \ref{Theorem4}, Dougall's well-poised $_5F_4$-series    \cite[cf. P. 27]{Baily}  counts for much
\begin{align}\label{F54}
&_{5}F_4\left[\begin{array}{cccccccc}
a,1+\frac{a}{2},b,c,d \\
  \frac{a}{2},1+a-b,1+a-c,1+a-d
\end{array}; 1
\right]\notag\\
&=\frac{\Gamma(1+a-b)\Gamma(1+a-c)\Gamma(1+a-d)\Gamma(1+a-b-c-d)}{\Gamma(1+a)\Gamma(1+a-b-c)\Gamma(1+a-b-d)\Gamma(1+a-c-d)},
\end{align}
where $\Re(1+a-b-c-d)>0$ and the hypergeometric series \cite[cf. \S 1.2 ]{Gasper&Rahman2014} is defined as
\begin{align*}
_{r+1}F_r\left[\begin{array}{cccccccc}
a_1,a_2,\cdots,a_{r+1} \\
  b_1,b_2,\cdots,b_r
\end{array}; z
\right]=\sum_{k=0}^{\infty}\frac{(a_1)_k(a_2)_k\cdots(a_{r+1})_k}{(1)_k(b_1)_k(b_2)_k\cdots(b_{r})_k}z^k.
\end{align*}
Arrange $(a,b,d)=(\frac{1}{2},\frac{1}{2},-\infty)$ in \eqref{F54} to obtain
\begin{align}\label{th1}
\sum_{k=0}^{\infty}(-1)^k\frac{(\frac{1}{2})_k^2(c)_k}{(1)_k^2(\frac{3}{2}-c)_k}(4k+1)
=\frac{2\Gamma(\frac{3}{2}-c)}{\sqrt{\pi}\Gamma(1-c)}.
\end{align}
Notice that \eqref{th1} has been  showed by Ekhad and Zeilberger \cite{EZ}   using the WZ method.
Draw the operator $\mathcal{D}_c$ on both sides of \eqref{th1} to attain
\begin{align*}
&\sum_{k=0}^{\infty}(-1)^k\frac{(\frac{1}{2})_k^2(c)_k}{(1)_k^2(\frac{3}{2}-c)_k}(4k+1)\big\{H_k(c-1)+H_k(\tfrac{1}{2}-c)\big\}\notag\\
&=\frac{2\Gamma(\frac{3}{2}-c)}{\sqrt{\pi}\Gamma(1-c)}\big\{\psi(1-c)-\psi(\tfrac{3}{2}-c)\big\}.
\end{align*}
%According to  the relation $H_k(-\frac{1}{2})=2H_{2k}-H_k$,
Letting $c=\frac{1}{2}$ in the above equation, we arrive at Theorem \ref{Theorem1}.

\section{Proofs of Theorems \ref{Theorem1}--\ref{Theorem2}}
The hypergeometric transformation formula due to Chu and Zhang \cite[Theorem 9]{ChuZhang} may contribute to the  derivations of Theorems \ref{Theorem1}--\ref{Theorem2}:
\begin{align}\label{Chu}
&\sum_{k=0}^{\infty}\frac{(c)_k(d)_k(e)_k(1+a-b-c)_k(1+a-b-d)_k(1+a-b-e)_k}{(1+a-c)_k(1+a-d)_k(1+a-e)_k(1+2a-b-c-d-e)_k}\notag\\
&\quad\times \frac{(-1)^k}{(1+a-b)_{2k}}\alpha_k(a,b,c,d,e)\notag\\
&=\sum_{k=0}^{\infty}\frac{(a+2k)(b)_k(c)_k(d)_k(e)_k}{(1+a-b)_k(1+a-c)_k(1+a-d)_k(1+a-e)_k},
\end{align}
where $\Re(1+2a-b-c-d-e)>0$  and
\begin{align*}
\alpha_k(a,b,c,d,e)=&\frac{(1+2a-b-c-d+2k)(a-e+k)}{1+2a-b-c-d-e+k}\notag\\
&+\frac{(e+k)(1+a-b-c+k)(1+a-b-d+k)}{(1+a-b+2k)(1+2a-b-c-d-e+k)}.
\end{align*}

\begin{proof}[Proof of Theorem \ref{Theorem1}]

Select $(a,b,d)=(\frac{1}{2},\frac{1}{2},\frac{1}{2})$ in \eqref{Chu} and evaluate Dougall's theorem \eqref{F54} to acquire
\begin{align}\label{1c1}
&\sum_{k=0}^{\infty}\frac{(-1)^k(\frac{1}{2})_k^2(c)_k(1-c)_k(e)_k(1-e)_k}{(1)_k(1)_{2k}(\frac{3}{2}-c)_k(\frac{3}{2}-e)_k(1-c-e)_k}\alpha_k(\tfrac{1}{2},\tfrac{1}{2},c,\tfrac{1}{2},e)\notag\\
%&\quad=\frac{1}{2}\sum_{k=0}^{\infty}\frac{(\frac{1}{2})_k^2(\frac{5}{4})_k(c)_k(e)_k}{(1)_k^2(\frac{1}{4})_k(\frac{3}{2}-c)_k(\frac{3}{2}-e)_k}\notag\\
&=\frac{\Gamma(\frac{3}{2}-c)\Gamma(\frac{3}{2}-e)\Gamma(1-c-e)}{\sqrt{\pi}\Gamma(1-c)\Gamma(1-e)\Gamma(\frac{3}{2}-c-e)}.
\end{align}
 For  a positive integer   $n$, taking $e=-n\rightarrow -\infty$  in \eqref{1c1} and observing the property (cf. \cite[Eq.(1.4.3)]{specialfunction})
\begin{align*}
\lim_{n\rightarrow \infty}\frac{\Gamma(x+n)}{\Gamma(y+n)}n^{y-x}=1,
\end{align*}
we are led to
\begin{align}\label{1c2}
\sum_{k=0}^{\infty}\frac{1}{4^k}\frac{(\frac{1}{2})_k(c)_k(1-c)_k}{(1)_k^2(\frac{3}{2}-c)_k}\frac{3k-c+1}{2}=\frac{\Gamma(\frac{3}{2}-c)}{\sqrt{\pi}\Gamma(1-c)}.
\end{align}
Apply the operator $\mathcal{D}_c$ on both sides of \eqref{1c2} to gain
\begin{align}\label{1c3}
&\sum_{k=0}^{\infty}\frac{1}{4^k}\frac{(\frac{1}{2})_k(c)_k(1-c)_k}{(1)_k^2(\frac{3}{2}-c)_k}\bigg\{\frac{3k-c+1}{2}\big(H_k(c-1)-H_k(-c)+H_k(\tfrac{1}{2}-c)\big)-\frac{1}{2}\bigg\}\notag\\
%&\quad+\sum_{k=0}^{\infty}\bigg(\frac{1}{4}\bigg)^k\frac{(\frac{1}{2})_k(c)_k(1-c)_k}{(1)_k^2(\frac{3}{2}-c)_k}\bigg(\frac{-1}{2}\bigg)\notag\\
&=\frac{\Gamma(\frac{3}{2}-c)}{\sqrt{\pi}\Gamma(1-c)}\big\{\psi(1-c)-\psi(\tfrac{3}{2}-c)\big\}.
\end{align}
The $c=\frac{1}{2}$ case of \eqref{1c3} produces the desired result \eqref{1}.
\end{proof}

\begin{proof}[Proof of Theorem \ref{Theorem5}]

Choose $(a,c,d,e)=(\frac{1}{2},\frac{1}{2},\frac{1}{2},-\infty)$ in \eqref{Chu} to get
%\begin{align}\label{2b1}
%&\sum_{k=0}^{\infty}\frac{(-1)^k(\frac{1}{2})_k^2(1-b)_k^2(e)_k(\frac{3}{2}-b-e)_k}{(\frac{3}{2}-b)_{2k}(1)_k^2(\frac{3}{2}-e)_k(1-b-e)_k}\alpha_k(\tfrac{1}{2},b,\tfrac{1}{2},\tfrac{1}{2},e)\notag\\
%&\quad=\frac{1}{2}\cdot\frac{\Gamma(1)\Gamma(\frac{3}{2}-b)\Gamma(\frac{3}{2}-e)\Gamma(1-b-e)}{\Gamma(\frac{3}{2})\Gamma(1-b)\Gamma(1-e)\Gamma(\frac{3}{2}-b-e)}.
%\end{align}
%Take $e=-n\rightarrow -\infty$  in the above equation to get
\begin{align}\label{2b2}
\sum_{k=0}^{\infty}\frac{(\frac{1}{2})_k^2(1-b)_k^2}{(1)_k^2(\frac{3}{2}-b)_{2k}}f_k(b)=\frac{\Gamma(\frac{3}{2}-b)}{\sqrt{\pi}\Gamma(1-b)},
\end{align}
where
$$
f_k(b)=\frac{6k^2-(4b-6)k-b+1}{4k-2b+3}.
$$
Adopt the operator $\mathcal{D}_b$ on both sides of \eqref{2b2} to draw
\begin{align}\label{2b3}
&\sum_{k=0}^{\infty}\frac{(\frac{1}{2})_k^2(1-b)_k^2}{(1)_k^2(\frac{3}{2}-b)_{2k}}\big\{f_k(b)\big(H_{2k}(\tfrac{1}{2}-b)-2H_k(-b)\big)-\mathcal{D}_bf_k(b)\big\}\notag\\
%&\quad+\sum_{k=0}^{\infty}\frac{(\frac{1}{2})_k^2(1-b)_k^2}{(\frac{3}{2}-b)_{2k}(1)_k^2} \bigg(-\frac{(2k+1)^2}{(4k-2b+3)^2}\bigg)\notag\\
&=\frac{\Gamma(\frac{3}{2}-b)}{\sqrt{\pi}\Gamma(1-b)}\big\{\psi(1-b)-\psi(\tfrac{3}{2}-b)\big\}.
\end{align}
The $b=\frac{1}{2}$ case of \eqref{2b3} gives
\begin{align}\label{2b4}
\sum_{k=0}^{\infty}\frac{1}{4^k}\frac{(\frac{1}{2})_k^3}{(1)_k^3}\big\{(6k+1)(3H_{2k}-2H_k)+1\big\}=\frac{8\log 2}{\pi}.
\end{align}
The combination of \eqref{1} and \eqref{2b4} yields the wanted result \eqref{2}.
\end{proof}

\begin{proof}[Proof of Theorem \ref{Theorem2}]
Substitute $(a,c,d,e)=(\frac{1}{2},\frac{1}{2},\frac{1}{4},\frac{3}{4})$ in \eqref{Chu} and multiply both sides by $\frac{1}{2}-b$ to get
\begin{align}\label{3b1}
\sum_{k=0}^{\infty}(-1)^k\frac{(\frac{1}{2})_k(\frac{3}{4}-b)_k(1-b)_k(\frac{5}{4}-b)_k}{(1)_k(\frac{3}{2}-b)_{2k}(\frac{3}{2}-b)_{k}}g_k(b)=\frac{\sqrt{2}\Gamma(\frac{3}{2}-b)^2}{4\Gamma(\frac{3}{4}-b)\Gamma(\frac{5}{4}-b)},
\end{align}
where
\begin{align*}
g_k(b)=\frac{40k^2+(50-48b)k+16b^2-32b+15}{16(4k-2b+3)}.
\end{align*}
Utilize the operator $\mathcal{D}_b$ on both sides of \eqref{3b1} to obtain
\begin{align}\label{3b2}
&\sum_{k=0}^{\infty}(-1)^k\frac{(\frac{1}{2})_k(\frac{3}{4}-b)_k(1-b)_k(\frac{5}{4}-b)_k}{(1)_k(\frac{3}{2}-b)_{2k}(\frac{3}{2}-b)_{k}}\notag\\
&\quad\times\big\{g_k(b) \big(H_{2k}(\tfrac{1}{2}-b)+H_{k}(\tfrac{1}{2}-b)-H_k(-\tfrac{1}{4}-b)-H_k(-b)-H_k(\tfrac{1}{4}-b)\big)+\mathcal{D}_bg_k(b)\big\}\notag\\
&=\frac{\sqrt{2}\Gamma(\frac{3}{2}-b)^2}{4\Gamma(\frac{3}{4}-b)\Gamma(\frac{5}{4}-b)}\big\{\psi(\tfrac{3}{4}-b)+\psi(\tfrac{5}{4}-b)-2\psi(\tfrac{3}{2}-b)\big\}.
\end{align}
Based on the formula $H_k(-\frac{1}{4})+H_k(-\frac{3}{4})=4H_{4k}-2H_{2k}$, the $b\rightarrow \frac{1}{2}^-$ case of \eqref{3b2} turns to be
\begin{align}\label{3b3}
&\sum_{k=0}^{\infty}\frac{(\frac{1}{2})_k(\frac{1}{4})_k(\frac{3}{4}-b)_k}{(-4)^k(1)_{k}^3}\bigg\{(20k+3)(H_{2k}+2H_k-4H_{4k})-14+\frac{1}{2k+1}\bigg\}\notag\\
&=\frac{8}{\pi}\big\{\psi(\tfrac{1}{4})+\psi(\tfrac{3}{4})-2\psi(1)\big\}.
\end{align}

Opt $(a,b,d,e)=(\frac{1}{2},\frac{1}{2},\frac{1}{4},\frac{3}{4})$ in \eqref{Chu} to get
\begin{align}\label{4c1}
\sum_{k=0}^{\infty}(-1)^k\frac{(\frac{1}{4})_k(\frac{3}{4})_k(c)_k(1-c)_k}{(1)_{2k}(\frac{3}{2}-c)_k^2}h_k(c)=\frac{\sqrt{2}\Gamma(\frac{3}{2}-c)^2}{4\Gamma(\frac{3}{4}-c)\Gamma(\frac{5}{4}-c)},
\end{align}
where
$$
h_k(c)=\frac{20k^2+(19-12c)k-5c+4}{16(2k+1)}.
$$
Employ the operator $\mathcal{D}_c$ on both sides of \eqref{4c1} to gain
\begin{align}\label{4c2}
&\sum_{k=0}^{\infty}(-1)^k\frac{(\frac{1}{4})_k(\frac{3}{4})_k(c)_k(1-c)_k}{(1)_{2k}(\frac{3}{2}-c)_k^2}\notag\\
&\quad\times \big\{h_k(c) \big(H_k(c-1)-H_k(-c)+2H_{k}(\tfrac{1}{2}-c)\big)+\mathcal{D}_ch_k(c)\big\}\notag\\
&=\frac{\sqrt{2}\Gamma(\frac{3}{2}-c)^2}{4\Gamma(\frac{3}{4}-c)\Gamma(\frac{5}{4}-c)}\big\{\psi(\tfrac{3}{4}-c)+\psi(\tfrac{5}{4}-c)-2\psi(\tfrac{3}{2}-c)\big\}.
\end{align}
The $c\rightarrow \frac{1}{2}^-$ case of \eqref{4c2} becomes
\begin{align}\label{4c3}
&\sum_{k=0}^{\infty}\frac{(\frac{1}{2})_k(\frac{1}{4})_k(\frac{3}{4}-b)_k}{(-4)^k(1)_{k}^3}\bigg\{(20k+3)H_k-6+\frac{1}{2k+1}\bigg\}\notag\\
&=\frac{4}{\pi}\big\{\psi(\tfrac{1}{4})+\psi(\tfrac{3}{4})-2\psi(1)\big\}.
\end{align}

Change $(a,b,c,e)=(\frac{1}{2},\frac{1}{2},\frac{1}{2},\frac{3}{4})$ in \eqref{Chu} to attain
\begin{align}\label{5d1}
&\sum_{k=0}^{\infty}\frac{(-1)^k(\frac{1}{2})_k^2(\frac{1}{4})_k(d)_k(1-d)_k}{(1)_k(1)_{2k}(\frac{3}{2}-d)_{k}(\frac{5}{4}-d)_{k}}s_k(d)=\frac{\Gamma(\frac{3}{4})\Gamma(\frac{3}{2}-d)\Gamma(\frac{5}{4}-d)}{\sqrt{\pi}\Gamma(\frac{1}{4})\Gamma(1-d)\Gamma(\frac{3}{4}-d)},
\end{align}
where
$$
s_k(d)=\frac{1}{8}\big(20k^2+(11-12d)k-d+1\big).
$$
Apply the operator $\mathcal{D}_d$ on both sides of \eqref{5d1} to acquire
\begin{align}\label{5d2}
&\sum_{k=0}^{\infty}\frac{(-1)^k(\frac{1}{2})_k^2(\frac{1}{4})_k(d)_k(1-d)_k}{(1)_k(1)_{2k}(\frac{3}{2}-d)_{k}(\frac{5}{4}-d)_{k}}\notag\\
&\quad\times\big\{s_k(d)\big(H_k(d-1)-H_k(-d)+H_{k}(\tfrac{1}{2}-d)+H_{k}(\tfrac{1}{4}-d)\big)+\mathcal{D}_ds_k(d)\big\}\notag\\
&=\frac{\Gamma(\frac{3}{4})\Gamma(\frac{3}{2}-d)\Gamma(\frac{5}{4}-d)}{\sqrt{\pi}\Gamma(\frac{1}{4})\Gamma(1-d)\Gamma(\frac{3}{4}-d)}\big\{\psi(\tfrac{3}{4}-d)+\psi(1-d)-\psi(\tfrac{3}{2}-d)-\psi(\tfrac{5}{4}-d)\big\}.
\end{align}
The $d\rightarrow \frac{1}{4}^-$ case of \eqref{5d2} transforms into
\begin{align}\label{5d3}
&\sum_{k=0}^{\infty}\frac{(\frac{1}{2})_k(\frac{1}{4})_k(\frac{3}{4})_k}{(-4)^k(1)_k^3}\big\{(20k+3)\big(2H_{k}(-\tfrac{3}{4})-H_k(-\tfrac{1}{4})+H_k-4\big)+8\big\}\notag\\
&=\frac{8}{\pi}\big\{\psi(\tfrac{1}{2})+\psi(\tfrac{3}{4})-\psi(\tfrac{5}{4})-\psi(1)\big\}.
\end{align}

Replace $(a,b,c,d)=(\frac{1}{2},\frac{1}{2},\frac{1}{2},\frac{1}{4})$ in \eqref{Chu} to get
\begin{align}\label{6e1}
&\sum_{k=0}^{\infty}(-1)^k\frac{(\frac{1}{2})_k^2(\frac{3}{4})_k(e)_k(1-e)_k}{(1)_k(1)_{2k}(\frac{3}{2}-e)_{k}(\frac{7}{4}-e)_{k}}t_k(e)=\frac{\Gamma(\frac{1}{4})\Gamma(\frac{3}{2}-e)\Gamma(\frac{7}{4}-e)}{4\sqrt{\pi}\Gamma(\frac{3}{4})\Gamma(1-e)\Gamma(\frac{5}{4}-e)},
\end{align}
where
$$
t_k(e)=\frac{1}{8}(5k-3e+3).
$$
Conduct the operator $\mathcal{D}_e$ on both sides of \eqref{6e1} to arrive at
\begin{align}\label{6e2}
&\sum_{k=1}^{\infty}(-1)^k\frac{(\frac{1}{2})_k^2(\frac{3}{4})_k(e)_k(1-e)_k}{(1)_k(1)_{2k}(\frac{3}{2}-e)_{k}(\frac{7}{4}-e)_{k}}\notag\\
&\quad\times\big\{t_k(e)\big(H_k(e-1)-H_k(-e)+H_{k}(\tfrac{1}{2}-e)+H_{k}(\tfrac{3}{4}-e)\big)+\mathcal{D}_et_k(e)\big\}\notag\\
&=\frac{\Gamma(\frac{1}{4})\Gamma(\frac{3}{2}-e)\Gamma(\frac{7}{4}-e)}{4\sqrt{\pi}\Gamma(\frac{3}{4})\Gamma(1-e)\Gamma(\frac{5}{4}-e)}\big\{\psi(1-e)+\psi(\tfrac{5}{4}-e)-\psi(\tfrac{3}{2}-e)-\psi(\tfrac{7}{4}-e)\big\}.
\end{align}
The $e\rightarrow \frac{3}{4}^-$ case of \eqref{6e2} concludes
\begin{align}\label{6e3}
&\sum_{k=0}^{\infty}\frac{(\frac{1}{2})_k(\frac{1}{4})_k(\frac{3}{4})_k}{(-4)^k(1)_k^3}\big\{(20k+3)\big(2H_{k}(-\tfrac{1}{4})-H_k(-\tfrac{3}{4})+H_k\big)-12\big\}\notag\\
&=\frac{8}{\pi}\big\{\psi(\tfrac{1}{4})+\psi(\tfrac{1}{2})-\psi(\tfrac{3}{4})-\psi(1)\big\}.
\end{align}
%Thanks to the  formula \cite[Chapter 14]{series}
%\begin{align*}
%\sum_{k=0}^{\infty}(20k+3)\frac{(\frac{1}{2})_k(\frac{1}{4})_k(\frac{3}{4})_k}{(-4)^k(1)_k^3}=\frac{8}{\pi},
%\end{align*}
The result of the linear combination $\eqref{4c3}-\eqref{3b3}-\eqref{5d3}-\eqref{6e3}$ coincides with \eqref{3} completely. Hence, we finish the proof of this theorem.
\end{proof}

\section{Proofs of Theorems \ref{Theorem7} and \ref{Theorem3}}
Different from the precious proofs, in this section, we will carry out the derivative on  the identities that are formulated by WZ pairs.
\begin{proof}[Proof of Theorem \ref{Theorem7}]
By setting up suitable WZ pairs $(F(n,k),G(n,k))$,
 Guillera \cite{Guillera2} showed that several identities involving $\pi$ can be extended to the ones with an extra parameter. For example,
for such WZ pair
\begin{align}\label{WZ}
\begin{aligned}
	& F(n, k)=\frac{64}{\pi^3} \frac{n^2}{4 n-2 k-1} \frac{\cos (\pi k) \Gamma(2 n-k+1 / 2) \Gamma(n+1 / 2)^3 \Gamma(k+1 / 2)^2}{(-1)^n\Gamma(n+k+1) \Gamma(2 n+1)^2}, \\
	& G(n, k)=\frac{1}{\pi^3}(20 n+2 k+3) \frac{ \cos (\pi k) \Gamma(2 n-k+1 / 2) \Gamma(n+1 / 2)^3 \Gamma(k+1 / 2)^2}{(-1)^n\Gamma(n+k+1) \Gamma(2 n+1)^2} ,
\end{aligned}
\end{align}
Guillera \cite{Guillera2} obtained, for any $k$,
\begin{align}\label{G}
\sum_{n=0}^\infty G(n,k) = \frac{8}{\pi}.
\end{align}
 In \eqref{G}, taking the derivative with respect to $k$ and then setting $k=0$, we derive that
\begin{align*}
\sum_{n=0}^\infty (-1)^n \frac{{4n \choose 2n} {2n \choose n}^2}{2^{10n}}
\big\{ (20n+3)  (2 H_{4n} - H_{2n} +H_n + 2 \log 2) - 2\big\} = 0.
\end{align*}
Thus, by \eqref{ra3} and the above equation, we prove the truth of Theorem \ref{Theorem7}.
\end{proof}
%Hence,
%\[
%\sum_{n=0}^\infty (-1)^n \frac{{4n \choose 2n} {2n \choose n}^2}{2^{10n}}
%\left( (20n+3) (2 H_{4n} - H_{2n} +H_n) - 2 \right) = - \frac{16 \log 2}{\pi}.
%\]
\begin{proof}[Proof of Theorem \ref{Theorem3}]
Let
\[
H(n,k) = F(n+1,n+k+1) + G(n,n+k),
\]
where $F(n,k)$ and  $G(n,k)$ are defined in \eqref{WZ}.
Guillera \cite{Guillera2} also presented the following result
\[
\sum_{n=0}^\infty H(n,k) = \frac{8}{\pi}.
\]
Now taking the derivative with respect to $k$ and then setting $k=0$, we derive that
\begin{align*}
\sum_{n=0}^\infty \frac{{2n \choose n}^3}{2^{12n}} \big\{(42n+5) ( H_{2n} - H_n - 2 \log 2) + 7\big\}= 0.
\end{align*}
%That is,
%\begin{equation}
%5\frac{1}{2} \sum_{n=0}^\infty \frac{{2n \choose n}^3}{2^{12n}} ((42n+5) ( %H_{2n} - H_n) + 7) =  \frac{16 \log 2}{\pi}.
%\end{equation}
Hence, by \eqref{ra4} and the above identity, we complete the proof  of  Theorem  \ref{Theorem3}.
\end{proof}

\section{Proof of Theorem \ref{Theorem6}}
Recall that another hypergeometric transformation  \cite[Theorem 14]{ChuZhang} is stated as
\begin{align}\label{ChuTh14}
&\sum_{k=0}^{\infty}\frac{(c)_k(e)_k(1+a-b-c)_k(1+a-b-e)_k(1+a-c-d)_k(1+a-d-e)_k}{(1+a-b)_{2k}(1+a-d)_{2k}(1+2a-b-c-d-e)_{2k}}\notag\\
&\quad\times \frac{(1+a-b-d)_{2k}}{(1+a-c)_{k}(1+a-e)_{k}}\delta_k(a,b,c,d,e)\notag\\
&=\sum_{k=0}^{\infty}\frac{(a+2k)(b)_k(c)_k(d)_k(e)_k}{(1+a-b)_k(1+a-c)_k(1+a-d)_k(1+a-e)_k},
\end{align}
where $\Re(1+2a-b-c-d-e)>0$  and
\begin{align*}
\delta_k(a,b,c,d,e)=&\frac{(1+2a-b-c-d+3k)(a-e+k)}{1+2a-b-c-d-e+2k}+\frac{(e+k)(2+2a-b-d-e+3k)}{(1+a-b+2k)(1+a-d+2k)}\notag\\
&\times \frac{(1+a-b-c+k)(1+a-b-d+2k)(1+a-c-d+k)}{(1+2a-b-c-d-e+2k)(2+2a-b-c-d-e+2k)}.
\end{align*}
Select $(a,c,d,e)=(\frac{1}{2},\frac{1}{2},1-b,\frac{1}{2})$ in \eqref{ChuTh14} to obtain
\begin{align}\label{th6.1}
\sum_{k=0}^{\infty}\frac{(\frac{1}{2})_{2k}(\frac{1}{2})_k^2(b)_k^2(1-b)_k^2}{(1)_{k}^2(1)_{2k}(\frac{3}{2}-b)_{2k}(\frac{1}{2}+b)_{2k}}w_k(b)=\frac{\Gamma(\frac{3}{2}-b)\Gamma(\frac{1}{2}+b)}{\pi\Gamma(b)\Gamma(1-b)},
\end{align}
where
$$
w_k(b)=\frac{120k^4+154k^3-(48b^2-48b-55)k^2-(22b^2-22b-6)k-3b^2+3b}{2(4k-2b+3)(4k+2b+1)}.
$$
Carry out the  operator $\mathcal{D}_b$ on both sides of \eqref{th6.1} to get
\begin{align}\label{th6.2}
&\sum_{k=0}^{\infty}\frac{(\frac{1}{2})_{2k}(\frac{1}{2})_k^2(b)_k^2(1-b)_k^2}{(1)_{k}^2(1)_{2k}(\frac{3}{2}-b)_{2k}(\frac{1}{2}+b)_{2k}}\notag\\
&\quad\times \big\{w_k(b)\big(2H_k(b-1)-2H_k(-b)+H_{2k}(\tfrac{1}{2}-b)-H_{2k}(b-\tfrac{1}{2})\big)+\mathcal{D}_bw_k(b)\big\}\notag\\
&=\frac{\Gamma(\frac{3}{2}-b)\Gamma(\frac{1}{2}+b)}{\pi\Gamma(b)\Gamma(1-b)}\big\{\psi(\tfrac{1}{2}+b)-\psi(\tfrac{3}{2}-b)+\psi(1-b)-\psi(b)\big\}.
\end{align}
Divide both sides of \eqref{th6.2} by $1-2b$ to get
\begin{align}\label{th6.3}
&\sum_{k=0}^{\infty}\frac{(\frac{1}{2})_{2k}(\frac{1}{2})_k^2(b)_k^2(1-b)_k^2}{(1)_{k}^2(1)_{2k}(\frac{3}{2}-b)_{2k}(\frac{1}{2}+b)_{2k}}\notag\\
&\quad\times \bigg\{w_k(b)\bigg(\sum_{j=1}^{k}\frac{2}{(b-1+j)(-b+j)}+\sum_{j=1}^{k}\frac{1}{(\frac{1}{2}-b+j)(b-\frac{1}{2}+j)}\bigg)+\frac{\mathcal{D}_bw_k(b)}{1-2b}\bigg\}\notag\\
&=\frac{\Gamma(\frac{3}{2}-b)\Gamma(\frac{1}{2}+b)}{\pi\Gamma(b)\Gamma(1-b)}\cdot\frac{\psi(\tfrac{1}{2}+b)-\psi(\tfrac{3}{2}-b)+\psi(1-b)-\psi(b)}{1-2b}.
\end{align}
The $b\rightarrow \frac{1}{2}^-$ case of \eqref{th6.3} turns into
\begin{align}\label{th6.4}
\sum_{k=0}^{\infty}\frac{1}{16^k}\frac{(\frac{1}{2})_k^3(\frac{1}{4})_k(\frac{3}{4})_k}{(1)_k^5}\bigg\{(120k^2+34k+3)\big(7H_{2k}^{(2)}-2H_k^{(2)}\big)+18-\frac{9}{2k+1}\bigg\}=\frac{32}{3}.
\end{align}

Organise $(a,b,d,e)=(\frac{1}{2},\frac{1}{2},\frac{1}{2},1-c)$ in \eqref{ChuTh14} to obtain
\begin{align}\label{th6.11}
\sum_{k=0}^{\infty}\frac{(\frac{1}{2})_{2k}(c)_k^3(1-c)_k^3}{(1)_{2k}^3(\frac{3}{2}-c)_{k}(\frac{1}{2}+c)_{k}}z_k(c)=\frac{\Gamma(\frac{3}{2}-c)\Gamma(\frac{1}{2}+c)}{\pi\Gamma(c)\Gamma(1-c)},
\end{align}
where
\begin{align*}
z_k(c)=&\frac{1}{2(1+2k)^3}\big(60k^5+107k^4+(8c^2-8c+74)k^3\notag\\
&+(6c^2-6c+24)k^2-(4c^4-8c^3+6c^2-2c-3)k-c^4+2c^3-2c^2+c\big).
\end{align*}
Carry out the  operator $\mathcal{D}_c$ on both sides of \eqref{th6.11} to get
\begin{align}\label{th6.22}
&\sum_{k=0}^{\infty}\frac{(\frac{1}{2})_{2k}(c)_k^3(1-c)_k^3}{(1)_{2k}^3(\frac{3}{2}-c)_{k}(\frac{1}{2}+c)_{k}}\notag\\
&\quad\times \big\{z_k(c)\big(3H_k(c-1)-3H_k(-c)+H_{k}(\tfrac{1}{2}-c)-H_{k}(c-\tfrac{1}{2})\big)+\mathcal{D}_cz_k(c)\big\}\notag\\
&=\frac{\Gamma(\frac{3}{2}-c)\Gamma(\frac{1}{2}+c)}{\pi\Gamma(c)\Gamma(1-c)}\big\{\psi(\tfrac{1}{2}+c)-\psi(\tfrac{3}{2}-c)+\psi(1-c)-\psi(c)\big\}.
\end{align}
Dividing  both sides of \eqref{th6.22} by $1-2c$ and then
letting  $c\rightarrow \frac{1}{2}^-$, we have
\begin{align}\label{th6.33}
\sum_{k=0}^{\infty}\frac{1}{16^k}\frac{(\frac{1}{2})_k^3(\frac{1}{4})_k(\frac{3}{4})_k}{(1)_k^5}\bigg\{(120k^2+34k+3)\big(12H_{2k}^{(2)}-4H_k^{(2)}\big)-16+\frac{24}{2k+1}\bigg\}=\frac{32}{3}.
\end{align}
Hence, the linear combination  $8\times\eqref{th6.4}+3\times\eqref{th6.33}$ is just the result \eqref{2H}.

\end{document}